\theoremstyle{plain}
\newtheorem{lem}{Lemma}[section]
\newtheorem{prop}[lem]{Proposition}
\newtheorem{thm}[lem]{Theorem}
\theoremstyle{definition}
\theoremstyle{remark}
\numberwithin{equation}{section}
\newcommand{\N}{\mathbb N}
\renewcommand{\epsilon}{\varepsilon}
\title[Fast Khintchine spectra]{Upper and lower fast Khintchine spectra in continued fractions}
\date{}
\author{Lingmin Liao}
\address{Lingmin Liao\\LAMA UMR 8050, CNRS,
Universit\'e Paris-Est Cr\'eteil, 61 Avenue du
G\'en\'eral de Gaulle, 94010 Cr\'eteil Cedex, France}
\email{lingmin.liao@u-pec.fr}
\author{Micha\l\ Rams}
\address{Micha\l\ Rams\\Institute of Mathematics\\ Polish
Academy of Sciences\\ ul.
\'Sniadeckich 8, 00-656 Warszawa\\ Poland }
\email{rams@impan.pl}
\begin{document}
\begin{abstract}
For an irrational number $x\in [0,1)$, let $x=[a_1(x), a_2(x),\cdots]$ be its continued fraction expansion. Let $\psi : \mathbb{N} \rightarrow \mathbb{N}$ be a function with $\psi(n)/n\to \infty$ as $n\to
\infty$. The (upper, lower) fast Khintchine spectrum for $\psi$ is defined as the Hausdorff dimension of the set of numbers $x\in (0,1)$ for which the (upper, lower) limit of $\frac{1}{\psi(n)}\sum_{j=1}^n\log a_j(x)$ is equal to $1$. The fast Khintchine spectrum was determined by Fan, Liao, Wang, and Wu.
We calculate the upper and lower fast Khintchine spectra. These three spectra can be different.
\end{abstract}

\maketitle
\def\thefootnote{}
\footnote{2010 {\it Mathematics Subject Classification}: Primary 11K50 Secondary 37E05, 28A78}
\def\thefootnote{\arabic{footnote}}

\section{Introduction}
Each irrational
number $x\in [0,1)$ admits a unique infinite continued fraction
expansion of the form
\begin{eqnarray}\label{ff1}
x=\frac{\displaystyle 1}{\displaystyle a_1(x)+ \frac{\displaystyle
1}{\displaystyle a_2(x)+\frac{\displaystyle 1}{\displaystyle
a_3(x)+\ddots}}},
\end{eqnarray}
where the positive integers $a_n(x)$, called the partial quotients of $x$, can be generated by using the Gauss
transformation $T:[0,1)\to [0,1)$ defined by  $$ T(0):=0, \ \
\text{and} \ \ T(x):=\frac{1}{x} \ {\rm{(mod \ 1)}}, \ {\rm{for}}\ x\in (0,1).
$$
In fact, let $a_1(x)= \lfloor x^{-1}\rfloor$ ($\lfloor \cdot \rfloor $ stands for the integer part), then $a_n(x)=a_1(T^{n-1}(x))$ for $n\ge 2$. For simplicity,  (\ref{ff1}) is often written as $x=[a_1,a_2,\cdots]$.
%The $n$-th convergent $p_n(x)/q_n(x)=[a_1,\cdots, a_n]$ of $x$
%is the finite truncation of (\ref{ff1}).

For any irrational number $x\in (0,1)$,  its {\it Khintchine exponent} is defined by the limit (if it exists)
\begin{align}\label{def-Khin}
\xi(x):=\lim_{n\to\infty}\frac{\log
a_1(x)+\cdots+\log a_n(x)}{n}.
\end{align}
Khintchine \cite{Kh35} (see also \cite[p.\,93]{Kh}) proved that for Lebesgue almost all points $x$, we have
\[
\xi(x)=\int_0^1 \frac {\log a_1(x)} {(1+x)\log 2} dx= \log (2.6854...) =:\xi_0.
\]
Though Khintchine did not use ergodic theory in his orignal proof, his result is a consequence of Birkhoff Ergodic Theorem, by the fact that the Gauss transformation $T$ is ergodic with respect to the measure
$dx/ ((1+x)\log 2)$ and that the average in (\ref{def-Khin}) can be written as a Birkhoff ergodic average.

From a multifractal point of view, one is also interested in the sets of points with a given Khintchine exponent which is different from $\xi_0$. The Hausdorff dimension (denoted by $\dim_H$) of the level sets
$$
E(\alpha):=\left\{x\in [0,1): \lim_{n\to\infty}\frac{\log
a_1(x)+\cdots+\log a_n(x)}{n}=\alpha\right\}, \quad \alpha>0,
$$ was calculated in  \cite{FLWW}. It turns out that the {\it Khintchine spectrum}, i.e., the function $\alpha \mapsto \dim_H E(\alpha)$ is a real-analytic curve increasing on $[0, \xi_0]$ and decreasing on $(\xi_0, \infty)$. Further, the Hausdorff dimension of $E(+\infty)$ is equal to $1/2$, which means that there are lots of numbers with infinite Khintchine exponent. This thus leads to the question of detailed classification of numbers with infinite Khintchine exponent.

Let $\alpha>0$ and let $\psi:\mathbb{N}\to \mathbb{N}$ such that $\psi(n)/n\to \infty$ as $n\to\infty$. We consider the following subsets of $E(+\infty)$:
$$
E(\psi, \alpha):=\left\{x\in [0,1): \lim_{n\to\infty}\frac{\log
a_1(x)+\cdots+\log a_n(x)}{\psi(n)}=\alpha\right\}  \quad \alpha>0.
$$ %\marginpar{added $\alpha>0$}
The so-called {\it fast Khintchine spectrum}, i.e., the Hausdorff dimension of $E(\psi, 1)$ was obtained in \cite{FLWW13}. Let $\psi$ and $\tilde{\psi}$ be two functions defined on
$\N$. We say $\psi$ and $\tilde{\psi}$ are {\em equivalent} if $\frac{\psi(n)}{\tilde{\psi}(n)}\to 1$ as $n\to \infty$.
Define
\begin{align}\label{def-beta}
\beta=\beta(\psi):=\limsup_{n\to
\infty}\frac{\psi(n+1)}{\psi(n)}.
\end{align} %\marginpar{separate the definition}
The authors of \cite{FLWW13} proved the following theorem.
\begin{thm}[\cite{FLWW13}, Theorem 1.1]\label{th-FLWW}  Let $\psi:\N\to \N$ with ${\psi(n)}/{n}\to \infty$ as $n\to \infty$.
If $\psi$ is equivalent to a nondecreasing function, then $E(\psi, 1)\ne
\emptyset$ and
$$ \dim_HE(\psi, 1)=\frac{1}{1+\beta(\psi)}.
 $$ Otherwise,
 $E(\psi, 1)=\emptyset$.
\end{thm}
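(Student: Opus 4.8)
The plan is to prove the dimension formula by sandwiching $\dim_H E(\psi,1)$ between matching bounds, after two reductions. Writing $S_n(x):=\sum_{j=1}^n\log a_j(x)$, observe that $S_n$ is nondecreasing and, on $E(\psi,1)$, satisfies $S_n\sim\psi(n)\to\infty$; hence any $x\in E(\psi,1)$ exhibits $\psi$ as equivalent to the nondecreasing function $n\mapsto S_n(x)$, which proves $E(\psi,1)=\emptyset$ as soon as $\psi$ is not equivalent to a nondecreasing function. Since both $E(\psi,1)$ and $\beta(\psi)$ are unchanged when $\psi$ is replaced by an equivalent function, I may assume $\psi$ nondecreasing and set $\phi(n):=\psi(n)-\psi(n-1)\ge 0$ (with $\psi(0):=0$), so that $\sum_{j\le n}\phi(j)=\psi(n)$ and $\tfrac{1}{1+\beta}=\liminf_{n}\tfrac{\psi(n)}{\psi(n)+\psi(n+1)}$. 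Throughout I use the standard facts that a rank-$n$ cylinder $I_n=I_n(a_1,\dots,a_n)$ has length $|I_n|\asymp q_n^{-2}$ with $\prod_{j\le n}a_j\le q_n\le 2^n\prod_{j\le n}a_j$, that $I_n$ splits into subcylinders $I_{n+1}(a_1,\dots,a_n,k)$ of length $\asymp|I_n|k^{-2}$, and that those with $k\ge A$ together fill one subinterval of $I_n$ of length $\asymp|I_n|/A$ lying at one end.

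For the lower bound I would exhibit an explicit Cantor subset. Put $s_n:=\lceil e^{\phi(n)}\rceil$ and $E_s:=\{x:\ s_n\le a_n(x)<2s_n\ \text{for all}\ n\}$. On $E_s$ one has $\log a_n=\phi(n)+O(1)$, so $S_n=\psi(n)+O(n)=(1+o(1))\psi(n)$ because $n=o(\psi(n))$; thus $E_s\subseteq E(\psi,1)$, which already gives $E(\psi,1)\ne\emptyset$. Endow $E_s$ with the uniform measure $\mu$ assigning each rank-$n$ cylinder the mass $\prod_{j\le n}s_j^{-1}=e^{-(1+o(1))\psi(n)}$, while the same cylinder has length $e^{-(2+o(1))\psi(n)}$. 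The decisive radius for the local dimension is $r\asymp|I_n|/s_{n+1}$, the diameter of the cluster of rank-$(n+1)$ subcylinders sitting inside $I_n$: there $\mu(B(x,r))\asymp\mu(I_n)$, whence
\[
\frac{\log\mu(B(x,r))}{\log r}=\frac{\psi(n)+o(\psi(n))}{2\psi(n)+\phi(n+1)+o(\psi(n))}=\frac{\psi(n)}{\psi(n)+\psi(n+1)}+o(1),
\]
and a direct check shows every intermediate scale yields a larger ratio. Taking the liminf and invoking the mass distribution principle gives $\dim_H E_s\ge\liminf_n\frac{\psi(n)}{\psi(n)+\psi(n+1)}=\frac{1}{1+\beta}$.

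For the upper bound, fix $s>\frac{1}{1+\beta}$ and a subsequence $n_k$ with $\psi(n_k+1)/\psi(n_k)\to\beta$, so that $(1-s)-s\,\psi(n_k+1)/\psi(n_k)\le-c<0$ for large $k$. Call a cylinder $I_n$ \emph{admissible} if $S_n\in[(1-\epsilon)\psi(n),(1+\epsilon)\psi(n)]$; for $x\in E(\psi,1)$ and $n$ large, $I_n(x)$ is admissible and $\log a_{n+1}(x)=S_{n+1}-S_n\ge\phi(n+1)-\epsilon(\psi(n+1)+\psi(n))$, so $E(\psi,1)\cap I_n$ lies in a single subinterval $J_n\subseteq I_n$ with $|J_n|\asymp|I_n|\,e^{-\phi(n+1)+\epsilon(\psi(n+1)+\psi(n))}$. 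Covering $E(\psi,1)$ by $\{J_n\}_{I_n\ \text{admissible}}$ (applied to the sets $F_N=\{x\in E(\psi,1):\ I_n(x)\ \text{admissible for all}\ n\ge N\}$, whose union is $E(\psi,1)$), and using $|I_n|\le(\prod_{j\le n}a_j)^{-2}=e^{-2S_n}$ together with the bound $\#\{(a_1,\dots,a_n):\prod a_j\le e^{(1+\epsilon)\psi(n)}\}=e^{(1+\epsilon)\psi(n)+o(\psi(n))}$ on the number of admissible cylinders, one finds
\[
\log\sum_{I_n\ \text{adm}}|J_n|^s\le\psi(n)\big[(1-s)-s\,\psi(n+1)/\psi(n)\big]+O(\epsilon\,\psi(n+1))+o(\psi(n)).
\]
Along $n=n_k$ the bracket is $\le-c$, so for $\epsilon$ small enough the right-hand side tends to $-\infty$; hence $\mathcal H^s(F_N)=0$ for every $N$, $\mathcal H^s(E(\psi,1))=0$, and $\dim_H E(\psi,1)\le s$. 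Letting $s\downarrow\frac{1}{1+\beta}$ completes the upper bound, and the same computation carried out for an arbitrary $s>0$ settles the case $\beta=\infty$, where the dimension is $0$.

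The crux is the upper bound, where two effects compete at each level: the factor $e^{-s\phi(n+1)}$, forced by the single large partial quotient $a_{n+1}$ needed to keep $S_{n+1}\approx\psi(n+1)$, pushes the exponent down toward $\frac{1}{1+\beta}$ along the extremal subsequence, whereas the entropy of admissible cylinders — counted by the $n$-fold divisor function, of logarithmic order $(1+\epsilon)\psi(n)+n\log(\psi(n)/n)$ — pushes the other way. The hypothesis $\psi(n)/n\to\infty$ is precisely what forces $n\log(\psi(n)/n)=o(\psi(n))$, so that this entropy is negligible and the extremal scale alone fixes the dimension. Making the geometry of the one-sided cover $J_n$ and this cancellation rigorous — uniformly over the admissibility window, and controlling the $O(\epsilon)$ terms so that $\epsilon$ may be sent to $0$ only after $n$ — is where the genuine work lies.
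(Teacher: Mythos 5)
First, a contextual point: Theorem~\ref{th-FLWW} is quoted from \cite{FLWW13} and is not proved in this paper, so your argument has to be measured against the machinery the paper does set up --- Lemma~\ref{lemma-FLWW} (i.e.\ Lemma~3.2 of \cite{FLWW}) and the parallel Cantor-set construction in the lower bound for $\overline{E}(\psi)$ --- which is also how the original proof runs. Your overall architecture is the right one: emptiness via the monotonicity of $S_n$, reduction to nondecreasing $\psi$ (using that both $E(\psi,1)$ and $\beta(\psi)$ are equivalence-invariant), a Cantor set $\{s_n\le a_n<2s_n\}$ with $s_n\approx e^{\psi(n)-\psi(n-1)}$ for the lower bound, and a covering of the sets $F_N$ by the one-sided refinements $J_n$ along a subsequence realizing $\beta$ for the upper bound. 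The upper bound is essentially complete: the count of admissible cylinders by the $n$-fold divisor function with the $1/(n-1)!$ saving, the observation that $\psi(n)/n\to\infty$ forces $n\log(\psi(n)/n)=o(\psi(n))$, and the bookkeeping that lets $\epsilon\to0$ after $n\to\infty$ all check out, including the cases $\beta=1$ and $\beta=\infty$.

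The genuine gap is in the lower bound, at exactly the point this paper flags for its own construction. Your sequence $s_n=\lceil e^{\psi(n)-\psi(n-1)}\rceil$ need not tend to infinity: a nondecreasing $\psi$ with $\psi(n)/n\to\infty$ may have arbitrarily long plateaus, on which $s_n\equiv 1$. Consequently you cannot invoke Lemma~\ref{lemma-FLWW}, which assumes $s_n\to\infty$, and the substitute you offer --- ``a direct check shows every intermediate scale yields a larger ratio'' --- asserts precisely the nontrivial content of that lemma. The delicate issue is not the critical scale $r\asymp |I_n|/s_{n+1}$ that you do compute, but the adjacency of selected cylinders when the $s_n$ are small: a ball of intermediate radius can then meet many selected rank-$n$ cylinders coming from different parents, and controlling $\mu(B(x,r))$ there is exactly where $s_n\to\infty$ is used in \cite{FLWW}. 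The repair is the paper's own device from the $\overline{E}(\psi)$ section: replace $s_n$ by $s_n+\alpha_n$ with $\alpha_n\to\infty$ so slowly that $\sum_{j\le n}\log\alpha_j=o(\psi(n))$ and $\log\alpha_{n+1}=o\bigl(\log(\alpha_1\cdots\alpha_n)\bigr)$; this restores the hypothesis of Lemma~\ref{lemma-FLWW} without changing either the inclusion into $E(\psi,1)$ or the value $\bigl(2+\limsup_n\frac{\log s_{n+1}}{\log s_1\cdots s_n}\bigr)^{-1}=\frac{1}{1+\beta}$. With that modification your proof is complete.
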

We remark that the level $\alpha=1$ of the level set $E(\psi, 1)$ in Theorem \ref{th-FLWW} can be replaced by any level $\alpha>0$. In fact, observing that for all $\alpha>0$, $E(\psi, \alpha)=E(\widehat{\psi}, 1)$ with $\widehat{\psi}=\alpha^{-1}\cdot \psi$ and that $\beta(\widehat{\psi})=\beta(\psi)$, we obtain
\[
 \dim_HE(\psi, \alpha)= \dim_HE(\widehat{\psi}, 1)=\frac{1}{1+\beta(\widehat{\psi})}=\frac{1}{1+\beta(\psi)}, \quad \forall \alpha>0.
\]
Similarly, when $\psi$ is not equivalent to a nondecreasing function, we have $E(\psi, \alpha)=\emptyset$ for all $\alpha>0$.
%prove that the Hausdorff dimensions of $E(\psi, \alpha)$ are all equal to that $E(\psi, 1)$.   %In this case, it turns out that the Hausdorff dimension does not depend on the level $\alpha$, but only on the increasing rate of $\psi$.

%When the sets $E(\psi, \alpha)$ are not empty, the dimension function associated to $\psi$ (and $\alpha$) is called the {\it fast Khintchine spectrum} in \cite{FLWW13}.
\medskip

In this note, we consider the following sets
\begin{align}\label{upper-set}
\overline{E}(\psi)=\left\{x\in [0,1]: \limsup_{n\to\infty}\frac{\log a_1(x)+\cdots+\log a_n(x)}{\psi(n)}=1\right\},
\end{align}
and
\begin{align}\label{lower-set}
\underline{E}(\psi)=\left\{x\in [0,1]: \liminf_{n\to\infty}\frac{\log a_1(x)+\cdots+\log a_n(x)}{\psi(n)}=1\right\}.
\end{align}
Their Hausdorff dimensions are called {\it upper} and {\it lower fast Khintchine spectra}.

Remark that we only consider the level $\alpha=1$ here, since for other levels the Hausdorff dimension will not change, as in Theorem \ref{th-FLWW}.

Our main result is as follows.

\begin{thm}\label{main} Assume that $\psi:\mathbb{N}\to\mathbb{N}$
satisfies $\psi(n)/n\to\infty$ as $n\to\infty$. Write
\begin{align}\label{def-bB}
\liminf_{n\to\infty}\frac{\log \psi(n)}{n}=\log b \quad \text{and} \quad \limsup_{n\to\infty}\frac{\log \psi(n)}{n}=\log B.
\end{align}
Assume $b, B \in (1, \infty]$. Then % the sets $\overline{E}(\psi)$ and $\underline{E}(\psi)$ are always nonempty and
$$
\dim_H\overline{E}(\psi)=\frac{1}{1+b}\quad \text{and} \quad \dim_H\underline{E}(\psi)=\frac{1}{1+B}.
$$
\end{thm}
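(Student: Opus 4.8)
The plan is to prove the two equalities by separating lower and upper bounds, reducing the lower bounds to the already-known Theorem \ref{th-FLWW} via a monotone-envelope trick, and handling the upper bounds by a direct covering argument. Throughout I write $S_n(x)=\sum_{j=1}^n\log a_j(x)$, so that $\overline E(\psi)=\{x:\limsup_n S_n/\psi(n)=1\}$ and $\underline E(\psi)=\{x:\liminf_n S_n/\psi(n)=1\}$. Two standard preliminary facts drive everything and I would record them first: a rank-$n$ cylinder $I(a_1,\dots,a_n)$ has length comparable to $q_n^{-2}$ with $\log q_n=S_n+O(n)$, and since $\psi(n)/n\to\infty$ the term $O(n)$ is negligible against $\psi(n)$; and the clustering estimate, that the sub-cylinders $I(a_1,\dots,a_{n-1},a)$ with $a\ge A$ all lie in a single interval of length $\asymp |I(a_1,\dots,a_{n-1})|/A$.

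For the lower bounds the idea is to sandwich. For $\overline E(\psi)$ I would build a nondecreasing integer function $\psi^\flat$ with $\psi^\flat\le\psi$, with $\psi^\flat(n)=\psi(n)$ for infinitely many $n$, and with $\beta(\psi^\flat)=\limsup_n\psi^\flat(n+1)/\psi^\flat(n)$ as close to $b$ as desired. Choosing (after passing to a sparse enough subsequence to guarantee $\psi^\flat\le\psi$ in between) a subsequence $n_k$ realising the liminf in (\ref{def-bB}), so $\psi(n_k)=b^{n_k(1+o(1))}$, and interpolating $\log\psi^\flat$ linearly between the nodes $(n_k,\log\psi(n_k))$ gives such a $\psi^\flat$, since the interpolation slope tends to $\log b$. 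Any $x$ with $S_n/\psi^\flat(n)\to1$ then satisfies $\limsup_n S_n/\psi(n)=\limsup_n\psi^\flat(n)/\psi(n)=1$, because this quantity is $\le1$ and equals $1$ along $n_k$; hence $E(\psi^\flat,1)\subseteq\overline E(\psi)$ and Theorem \ref{th-FLWW} yields $\dim_H\overline E(\psi)\ge\frac{1}{1+\beta(\psi^\flat)}\to\frac{1}{1+b}$. The set $\underline E(\psi)$ is symmetric: one builds a nondecreasing majorant $\psi^\sharp\ge\psi$ agreeing with $\psi$ along a subsequence realising the limsup, so $\beta(\psi^\sharp)$ is near $B$ and $S_n/\psi^\sharp(n)\to1$ forces $\liminf_n S_n/\psi(n)=\liminf_n\psi^\sharp(n)/\psi(n)=1$, giving $E(\psi^\sharp,1)\subseteq\underline E(\psi)$ and $\dim_H\underline E(\psi)\ge\frac{1}{1+B}$.

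For the upper bounds the monotone trick is unavailable and I would argue by covering. Fix $\epsilon>0$; then $\overline E(\psi)\subseteq\{x:S_n\ge(1-\epsilon)\psi(n)\text{ for infinitely many }n\}$, while $\underline E(\psi)\subseteq\bigcup_N\{x:S_n\ge(1-\epsilon)\psi(n)\text{ for all }n\ge N\}$. The governing heuristic, which the covering must make rigorous, is that because $S_n$ is nondecreasing, reaching level $(1-\epsilon)\psi(n)$ infinitely often (respectively eventually) forces the exponential growth rate of $S_n$ along the relevant times to be at least $b$ (respectively $B$); at such a level one has $S_n\approx\psi(n)$ and $S_{n-1}\approx\psi(n)/b$, so the clustering estimate produces $\asymp e^{S_{n-1}}$ covering intervals each of length $\asymp e^{-S_{n-1}-S_n}$, and the balance $e^{S_{n-1}}\cdot(\text{length})^{s}\asymp1$ gives exactly $s=\frac{S_{n-1}}{S_{n-1}+S_n}\to\frac{1}{1+b}$ (respectively $\frac{1}{1+B}$). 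Summing these contributions over the admissible levels is legitimate because $\psi(n)\to\infty$ forces geometric convergence of the resulting series for every $s$ above the target exponent.

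The main obstacle will be this last step: the naive sum of the cluster contributions over all admissible prefixes $(a_1,\dots,a_{n-1})$ diverges, since it factors through $\zeta(s)^{\,n-1}$ which is infinite for $s\le1$, because prefixes already carrying a large partial quotient are over-counted. Taming this is the technical heart of the argument: one must restrict to prefixes whose partial sums grow at the critical rate, charging the over-large prefixes to an earlier scale, which is precisely the covering scheme used in the proof of Theorem \ref{th-FLWW} in \cite{FLWW13}. I expect the bulk of the work, and the only genuinely delicate estimates, to lie in adapting that scheme so that the controlling rate is the liminf rate $b$ for the $\limsup$-set and the limsup rate $B$ for the $\liminf$-set. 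Finally, the boundary cases $b=\infty$ or $B=\infty$, where the asserted dimension is $0$, follow by comparison, letting finite truncations of the growth rate tend to infinity.
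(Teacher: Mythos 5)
Your high-level architecture is reasonable, and your reduction logic for the lower bounds is correct as far as it goes: \emph{if} a nondecreasing minorant $\psi^\flat\le\psi$ (resp.\ majorant $\psi^\sharp\ge\psi$) exists that agrees with $\psi$ infinitely often and has $\beta$-value close to $b$ (resp.\ $B$), then $E(\psi^\flat,1)\subseteq\overline E(\psi)$ and $E(\psi^\sharp,1)\subseteq\underline E(\psi)$, and Theorem \ref{th-FLWW} finishes the job. The gap is that the constructions you offer do not produce these objects, and producing them is where essentially all the work of the paper lies. For the minorant, linear interpolation of $\log\psi$ between sparse nodes realising the $\liminf$ runs at height roughly $(\log b)n$, while $\psi$ may dip to $e^{(\log b)n-10\log n}$ at intermediate times without changing $b$; then $\psi^\flat\le\psi$ fails and the inclusion in $\overline E(\psi)$ breaks. (The paper instead works with $\phi(n)=\min_{k\ge n}\psi(k)$ and recursively caps the growth, \eqref{def-cn}, and must then argue separately, via \eqref{eqn:phi}--\eqref{c_n-inf+}, that the capped minorant still touches $\psi$ itself infinitely often.) The majorant is harder still: a nondecreasing $\psi^\sharp\ge\psi$ can only touch $\psi$ at running-maximum times, the running maximum has uncontrolled one-step ratio (take $\psi(n)=2^n$ except $\psi(n_k)=n_k^{100}2^{n_k}$), and linear interpolation between limsup-realising nodes need not dominate $\psi$ in between (take $\log\psi(n)=(\log B)n+\sqrt n$). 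The paper's resolution is the envelope $A_i=\sup_{n\ge i}\exp\{\psi(n)(B+\epsilon)^{i-n}\}$ normalised by $Z=\liminf_n\bigl(\sum_{i\le n}\log A_i\bigr)/\psi(n)$, and the crux is proving $Z<\infty$ (Proposition \ref{prop-finite}, via a Stolz--Ces\`aro argument); your sentence ``one builds a nondecreasing majorant agreeing with $\psi$ along a subsequence realising the limsup, so $\beta(\psi^\sharp)$ is near $B$'' is precisely the statement that has to be proved, not an obvious step.

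The upper bounds are also not established. Your covering scheme is left as a programme, and the obstacle you yourself flag (divergence of the naive sum over prefixes) is the entire difficulty: a single-scale covering of $\{x:S_n\ge(1-\epsilon)\psi(n)\}$ only yields dimension $1/2$, the scheme of \cite{FLWW13} is built to produce the exponent $\beta(\psi)$ rather than $b$ or $B$, and your heuristic $S_{n-1}\approx\psi(n)/b$ is unjustified since nothing in the definition of $\overline E(\psi)$ bounds $S_{n-1}$ from below at the good times. The paper sidesteps the covering entirely: from $S_n\ge(1-\delta)\psi(n)\ge(1-\delta)(b-\epsilon)^n$ one digit $a_i$ with $i\le n$ must satisfy $\log a_i\ge(1-\delta)(b-\epsilon)^n/(2n)\ge(b-2\epsilon)^n\ge(b-2\epsilon)^i$, this happens for infinitely many distinct $i$, and \L uczak's theorem (Lemma \ref{Luczak}) applied to $\{x:a_i(x)\ge e^{(b-2\epsilon)^i}\text{ i.o.}\}$ gives the bound $1/(1+b-2\epsilon)$; the same device along a subsequence with $\psi(n_i)\ge(B-\epsilon)^{n_i}$ handles $\underline E(\psi)$. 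That reduction to a single huge digit is the missing idea; without it, or a fully executed multi-scale covering, the inequalities $\dim_H\overline E(\psi)\le\frac1{1+b}$ and $\dim_H\underline E(\psi)\le\frac1{1+B}$ remain unproved.
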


%\marginpar{some changes}
We remark that $b\leq B\leq \beta$. The first inequality is directly from the definitions (\ref{def-bB}).  For the second, by (\ref{def-beta}), for any small $\epsilon>0$, there exists $n_0$ such that $\psi(i+1)/\psi(i) \leq \beta+\epsilon$ for all $i\geq n_0$. Then
\[
\psi(n) = \psi(n_0) \cdot \prod_{i=n_0}^{n-1} \frac{\psi(i+1)}{\psi(i)} \leq \psi(n_0) \cdot (\beta + \epsilon)^{n-n_0}.
\]
Hence the second inequality follows from  the definition (\ref{def-bB}) of $B$.
%since $E(\psi)=\overline{E}(\psi)\cap \underline{E}(\psi)$, we deduce from Theorems \ref{th-FLWW} and \ref{main} that $b\leq B \leq \beta$. This later property can also be proved by fundamental analysis.
However, one can construct some $\psi$ such that the three values $ b$, $B$ and $\beta$ are all different.

 We also remark that from Theorem \ref{main}, the sets
$\overline{E}(\psi)$ and $\underline{E}(\psi)$ are always nonempty.

%To end this section, we would like to explain our motivation of the work.
Our result can be considered as a contribution to the multifractal analysis of Birkhoff sums (averages) of dynamical systems.
In history, the first multifractal analysis of Birkhoff averages may be due to Besicovitch who studied the frequencies of digits in binary expansions. Let $x\in [0,1]$ and let $x=.x_1x_2\dots, $ with $x_i\in \{0,1\}$ denote its
binary expansion. Besicovitch (\cite[p.\,322]{Be}) obtained the Hausdorff dimension of the following level sets
\[
\left\{x\in [0,1] : \limsup_{n\to \infty} \frac{x_1+\cdots + x_n} n \leq \alpha\right\}, \ \alpha\in [0,1/2].
\]
Let $(X,d)$ be a metric space, $T$ be a (piecewise) continuous transformation on $X$, and $\phi : X\rightarrow \mathbb{R}$ be a real-valued (piecewise) continuous function. The Hausdorff dimension of the level sets of Birkhoff averages:
\[
\left\{x\in X : \lim_{n\to\infty} \frac{\phi(x)+\phi(Tx)+\cdots + \phi(T^nx)} {n} = \alpha\right\}, \quad \alpha\in \mathbb{R},
\]
were widely studied (\cite{Ol, FFW, PW}).
In \cite[Theorem 3.3]{FST}, a multifractal analysis result of the above level sets replacing $\lim$ by $\limsup$ and $\liminf$ was established for the full shift over two symbols and was applied to the study of dynamical Diophantine approximation.

In contrast to continued fractions, in symbolic dynamical systems of finitely many symbols there is no fast spectrum since the Birkhoff averages are usually bounded (for example, when $\phi$ is continuous). Thus the fast spectra studied in this note and in \cite{FLWW, WX, X, LR} are new subjects for continued fractions and can have generalization in symbolic dynamical systems of infinitely many symbols.

The result of \cite[Theorem 3.3]{FST} %\marginpar{precise the theorem} 
shows that in symbolic dynamical systems with finitely many symbols, the multifractal spectra remain the same when $\lim$ is replaced by $\limsup$ or $\liminf$.
However, our main result Theorem \ref{main} proves that in the case of fast Khintchine spectra, if we change $\lim$ to $\limsup$ or $\liminf$, the result changes essentially. This uncovers a new phenomenon in continued fractions and in symbolic dynamical systems of infinitely many symbols.

\bigskip
\section{Preliminaries}
%Recall that for any irrational number $x\in [0,1)$, $p_n(x)$ and
%$q_n(x)$ are the numerator and denominator of  the $n$-th convergent
%of $x$. % denoted by $\frac{p_n}{q_n}$ of $x$ is given by
%%$$ \frac{p_n}{q_n}=\frac{p_n(x)}{q_n(x)}=\frac{\displaystyle
%%1}{\displaystyle a_1(x)+ \frac{\displaystyle 1}{\displaystyle
%%\ddots+\frac{\displaystyle 1}{\displaystyle a_n(x)}}}.
%%$$
%It is known that $p_n=p_n(x)$ and $q_n=q_n(x)$ can be obtained recursively by the following
%relations.
%\begin{equation}\label{ff2.1}
%\begin{split}
%p_n=a_n(x) p_{n-1}+p_{n-2}, \ \ q_n=a_n(x)
%q_{n-1}+q_{n-2}
%\end{split}
%\end{equation}with the conventions $p_0=q_{-1}=0$ and
%$p_{-1}=q_0=1$. For each $n\ge 1$,
%\begin{equation}\label{7}
% p_{n-1}q_n-p_nq_{n-1}=(-1)^n.
%\end{equation}
%
 For any $n \geq 1$ and $(a_1,a_2,\cdots,a_n)\in
\mathbb{N}^n$, define $$ I_n(a_1, a_2, \cdots, a_n)=\big\{x\in
[0,1):\ a_1(x)=a_1, \cdots, a_n(x)=a_n\big\},
$$ which is the set of numbers starting with $(a_1,\cdots, a_n)$ in their continued fraction expansions,
and is called a {\em basic interval} of order $n$. The length of a basic interval will be denoted by $|I_n|$.
%
%Note that $p_n$ and $q_n$ are determined  by the first $n$ partial
%quotients of $x$. So all points in $I_n(a_1,\cdots, a_n)$  determine
%the same $p_n$ and $q_n$. Hence sometimes, we write
%$p_n=p_n(a_1,\cdots, a_n)$ and $q_n=q_n(a_1,\cdots, a_n)$ to denote
%$p_n(x)$ and $q_n(x)$ for $x\in I_n(a_1,\cdots, a_n)$.
\begin{prop}[\cite{Kh}, p.\,66, p.\,68]\label{p2.1}
For any $n\geq 1$ and $(a_1,\cdots, a_n)\in \mathbb{N}^n$,
\begin{equation}\label{length-int}
  \left(2^n \prod_{k=1}^na_k\right)^{-2} \le |I_n(a_1,\cdots,a_n)|\le \left(\prod_{k=1}^na_k\right)^{-2}.
\end{equation}

% let $q_n$
%be given recursively by (\ref{ff2.1}).
% The cylinder $I_n(a_1,\cdots,a_n)$ is an interval with the endpoints $p_n/q_n$ and $(p_n+p_{n-1})/(q_n+q_{n-1})$.
%Then%\begin{equation}\label{ff10}
%%I_n(a_1,a_2,\cdots,a_n)= \left\{
%%\begin{array}{ll}
%%         \left[\frac{p_n}{q_n}, \frac{p_n+p_{n-1}}{q_n+q_{n-1}}\right),    & {\rm when }\ \
%%         n\ {\rm{is\ even}},\\
%%         \left(\frac{p_n+p_{n-1}}{q_n+q_{n-1}}, \frac{p_n}{q_n}\right],    & {\rm when }\ \
%%         n\ {\rm{is\ odd}}.
%%\end{array}
%%        \right.
%%\end{equation}
%\begin{equation}\label{5}
%\frac{1}{2q_n^2}\le \Big|I_n(a_1,\cdots,
%a_n)\Big|=\frac{1}{q_n(q_n+q_{n-1})}\le
%\frac{1}{q_n^2}.\end{equation}
%For each $n\ge 1$, $q_n(a_1, \cdots,
%a_n)\ge 2^{(n-1)/2}$ and \begin{equation}\label{ff13}
%\prod_{k=1}^na_k\le q_n(a_1,\cdots,a_n)\le   2^n \prod_{k=1}^na_k.
%\end{equation}
\end{prop}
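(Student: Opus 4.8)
The plan is to reduce $|I_n(a_1,\ldots,a_n)|$ to an explicit expression in the denominators of the convergents and then to control those denominators by $\prod_{k=1}^n a_k$. Recall the convergents $p_n/q_n=[a_1,\ldots,a_n]$, where $p_n=a_np_{n-1}+p_{n-2}$ and $q_n=a_nq_{n-1}+q_{n-2}$ with $p_{-1}=1,p_0=0,q_{-1}=0,q_0=1$. Every $x\in I_n(a_1,\ldots,a_n)$ can be written as $x=(p_n+p_{n-1}t)/(q_n+q_{n-1}t)$ with $t=T^n(x)\in[0,1)$. This M\"obius map is monotone in $t$ (its derivative has the constant sign $(-1)^n/(q_n+q_{n-1}t)^2$), so $I_n$ is the interval whose endpoints are the values at $t=0$ and $t=1$, namely $p_n/q_n$ and $(p_n+p_{n-1})/(q_n+q_{n-1})$.

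First I would compute the length. Subtracting the two endpoints and invoking the determinant identity $p_nq_{n-1}-p_{n-1}q_n=(-1)^{n-1}$ gives
\[
|I_n(a_1,\ldots,a_n)|=\left|\frac{p_n}{q_n}-\frac{p_n+p_{n-1}}{q_n+q_{n-1}}\right|=\frac{1}{q_n(q_n+q_{n-1})}.
\]
Since $q_{n-1}\le q_n$ (immediate from the recursion, as $a_n\ge1$), this is sandwiched as $\tfrac{1}{2q_n^2}\le|I_n|\le\tfrac{1}{q_n^2}$, so everything reduces to two-sided control of $q_n$ in terms of $\prod_{k=1}^n a_k$.

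Next I would establish both bounds on $q_n$ by induction. For the lower bound, $q_n=a_nq_{n-1}+q_{n-2}\ge a_nq_{n-1}$ iterates to $q_n\ge\prod_{k=1}^n a_k$, whence $|I_n|\le q_n^{-2}\le(\prod_{k=1}^n a_k)^{-2}$, which is the right-hand inequality. For the upper bound I would prove $q_n\le 2^{n-1}\prod_{k=1}^n a_k$: using $q_{n-2}\le q_{n-1}$ gives $q_n\le(a_n+1)q_{n-1}\le 2a_n q_{n-1}$, and the inductive hypothesis then yields $q_n\le 2a_n\cdot2^{n-2}\prod_{k=1}^{n-1}a_k=2^{n-1}\prod_{k=1}^n a_k$. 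Consequently $q_n(q_n+q_{n-1})\le 2q_n^2\le 2\cdot4^{n-1}(\prod_{k=1}^n a_k)^2=2^{2n-1}(\prod_{k=1}^n a_k)^2\le(2^n\prod_{k=1}^n a_k)^2$, which gives the left-hand inequality $|I_n|\ge(2^n\prod_{k=1}^n a_k)^{-2}$.

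The only delicate point is the constant bookkeeping in the lower bound for $|I_n|$. The naive estimate $q_n\le\prod_{k=1}^n(a_k+1)\le 2^n\prod_{k=1}^n a_k$, combined with $q_n+q_{n-1}\le 2q_n$, loses an extra factor of $2$ and yields only $|I_n|\ge\tfrac12(2^n\prod_{k=1}^n a_k)^{-2}$. The sharper bound $q_n\le 2^{n-1}\prod_{k=1}^n a_k$ is exactly what makes the stated exponent $2^n$ (rather than $2^{n+1}$) sufficient, with a little room to spare; verifying its inductive step, together with the correct reading of the half-open endpoint parametrization, is the one place where care is required.
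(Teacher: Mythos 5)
Your proof is correct. The paper itself gives no argument for this proposition---it simply cites Khintchine's book (pp.~66, 68)---and your derivation is exactly the standard one behind that citation: the exact length formula $|I_n|=1/\bigl(q_n(q_n+q_{n-1})\bigr)$ from the M\"obius parametrization and the determinant identity, followed by the two-sided bounds $\prod_{k=1}^n a_k\le q_n\le 2^{n-1}\prod_{k=1}^n a_k$ by induction, with the sharper constant $2^{n-1}$ correctly handling the bookkeeping needed for the stated factor $2^n$.
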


The following lemma is used to calculate the lower bound of the Hausdorff dimension of $\overline{E}(\psi)$.

Let $\ell\geq 2$ be some fixed real number and $\{s_n\}_{n\geq 1}$ be a sequence of real numbers such that $s_n\geq 1$. Set $$
F(\{s_n\}_{n=1}^{\infty};\ell):=\big\{x\in [0,1): s_n\leq a_n(x)<\ell
s_n, \ {\rm{for \ all}}\ n\geq 1\big\}.
$$
\begin{lem}[\cite{FLWW}, Lemma 3.2]\label{lemma-FLWW}
Under the assumption that $ s_n\to \infty$ as
$n\to \infty$, one has\begin{eqnarray*}
\dim_HF(\{s_n\}_{n=1}^{\infty};\ell)=\left(2+\limsup_{n\to \infty}\frac{\log s_{n+1}} {\log s_1s_2\cdots s_n}\right)^{-1}.
\end{eqnarray*}
\end{lem}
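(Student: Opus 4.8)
The plan is to establish the two matching bounds separately: the upper bound by exhibiting an efficient cover of $F:=F(\{s_n\};\ell)$, and the lower bound by the mass distribution principle applied to a natural measure. Throughout I write $L_n=\log(s_1 s_2\cdots s_n)=\sum_{k=1}^n\log s_k$, let $\sigma=\limsup_n (\log s_{n+1})/L_n$, and set $d=(2+\sigma)^{-1}$; since $s_n\to\infty$ we have $L_n/n\to\infty$, a fact used repeatedly to absorb additive errors of size $O(n)$. The number of admissible values of the $k$-th digit is $M_k=\#([s_k,\ell s_k)\cap\Z)$, and because $\ell\ge 2$ and $s_k\to\infty$ one has $\log M_k=\log s_k+O(1)$. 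The geometric object driving both bounds is the coalesced block
$$ J_n(a_1,\dots,a_n)=\bigcup_{s_{n+1}\le a_{n+1}<\ell s_{n+1}} I_{n+1}(a_1,\dots,a_n,a_{n+1}), $$
the part of $I_n$ surviving to level $n+1$. As the admissible $(n+1)$-th digits form a contiguous block of integers, $J_n$ is (up to the Gauss-map distortion) an interval, and summing the child lengths via Proposition \ref{p2.1} gives $\diam J_n\asymp |I_n|/s_{n+1}$.

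For the upper bound, observe that the family $\{J_n(a_1,\dots,a_n)\}$ over admissible $(a_1,\dots,a_n)$ covers $F$, there being $\prod_{k=1}^n M_k\le\ell^n\prod_{k=1}^n s_k$ of them. Using $|I_n|\le(\prod_{k\le n}s_k)^{-2}$ together with $\diam J_n\asymp|I_n|/s_{n+1}$, the $t$-cost of this cover is bounded by
$$ \sum \diam(J_n)^t \;\lesssim\; \ell^n\Big(\prod_{k=1}^n s_k\Big)^{1-2t}s_{n+1}^{-t}. $$
Taking logarithms and dividing by $L_n$, the exponent behaves like $(1-2t)-t(\log s_{n+1})/L_n+o(1)$, since $(n\log\ell)/L_n\to 0$. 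Choosing a subsequence along which $(\log s_{n+1})/L_n\to\sigma$, this tends to $1-t(2+\sigma)$, which is negative precisely when $t>d$. As $\diam J_n\to 0$, it follows that $\mathcal H^t(F)=0$ for every $t>d$, hence $\dim_H F\le d$.

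For the lower bound, define the probability measure $\mu$ assigning each admissible order-$n$ interval the mass $\mu(I_n)=\prod_{k=1}^n M_k^{-1}$ and apply the mass distribution principle: it suffices to show $\mu(B(x,r))\lesssim r^{t'}$ for all small $r$ and every $t'<d$. A computation on basic intervals alone is misleading: from $\log\mu(I_n)=-L_n+O(n)$ and $\log|I_n|=-2L_n+O(n)$ one gets $\log\mu(I_n)/\log|I_n|\to 1/2$, so basic intervals only ``see'' dimension $1/2\ge d$. The binding scale is instead $\diam J_n\asymp|I_n|/s_{n+1}$: since $J_n$ carries the full mass $\mu(I_n)$ while its diameter is smaller than $|I_n|$ by the factor $s_{n+1}$, one computes
$$ \frac{\log\mu(I_n)}{\log\diam J_n}=\frac{L_n+O(n)}{2L_n+\log s_{n+1}+O(n)}\longrightarrow\Big(2+\limsup_n\frac{\log s_{n+1}}{L_n}\Big)^{-1}=d $$
in the liminf. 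Thus $\mu(I_n)\le\diam(J_n)^{t'}$ eventually, and a scale-by-scale analysis of $\mu(B(x,r))$, in which a ball of radius $r\in[|I_{n+1}|,\diam J_n]$ meets $\asymp r s_{n+1}^2/|I_n|$ of the contiguous children, each of mass $\mu(I_n)/M_{n+1}$, so that $\mu(B(x,r))\lesssim r s_{n+1}\mu(I_n)/|I_n|$, whose worst case $r=\diam J_n$ reduces exactly to $\mu(I_n)\le\diam(J_n)^{t'}$, yields $\mu(B(x,r))\lesssim r^{t'}$ at every scale. Letting $t'\uparrow d$ gives $\dim_H F\ge d$.

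The main obstacle is this intermediate-scale ball estimate. The naive local dimension on basic intervals equals $1/2$ regardless of $\sigma$, so the true dimension is revealed only at the coalesced scale $\diam J_n$, where the next, possibly very large, digit $s_{n+1}$ compresses the mass $\mu(I_n)$ into a block of size $|I_n|/s_{n+1}$. Making the estimate uniform over all $r$ requires identifying, for each $x$ and $r$, the correct level $n$ and sub-regime (ball inside one child, ball spanning several children within $J_n$, or ball containing all of $J_n$), and checking that in each the exponent is at least $t'$; the delicate case is a ball spanning several contiguous children, which is exactly the regime that forces the factor $s_{n+1}$ into the denominator and pins the dimension at $d$.
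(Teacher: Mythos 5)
Your proposal is correct in its approach and its key computations check out, but it is worth pointing out that the paper itself never proves this statement: Lemma \ref{lemma-FLWW} is quoted from \cite{FLWW} (Lemma 3.2), with only a remark that the integrality assumptions on $\{s_n\}$ and $\ell$ can be dropped. The closest thing to a proof in the paper is the lower-bound argument of Subsection \ref{Dim-E-under}, and your lower bound is exactly that mechanism: a uniform Bernoulli-type measure on the admissible digits, together with the observation that the dimension cannot be read off from the basic intervals $I_n$ (where the ratio of logarithms tends to $1/2$) but only from the coalesced intervals --- your $J_n$, the paper's $D_n$ --- whose diameter is smaller than $|I_n|$ by the factor $s_{n+1}$. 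The difference is that the paper then cites Jordan--Rams \cite{JR} (Section 4) and Frostman's lemma to convert the liminf of $\log\mu(D_n)/\log|D_n|$ into a dimension bound, whereas you carry out the ball-by-ball mass-distribution estimate yourself; you also supply the covering upper bound, which the paper never needs for its main theorem (its upper bounds come from \L uczak's lemma, Lemma \ref{Luczak}). So your proof is a legitimate, self-contained reconstruction of what is outsourced to \cite{FLWW} and \cite{JR}.

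One step that you name but do not verify deserves attention: in the regime where $B(x,r)$ contains all of $J_n$, i.e. $\diam J_n\lesssim r\lesssim |I_n|$, your estimate implicitly uses $\mu(B(x,r))\le\mu(I_n)$, which requires that the nearest $\mu$-mass outside $I_n$ lies at distance $\gtrsim |I_n|$ from $J_n$ --- otherwise a ball of radius $r\approx\diam J_n$ could swallow mass from neighboring cylinders and the exponent would drop. This separation does hold, but for a geometric reason specific to continued fractions: $J_n(a_1,\dots,a_n)$ accumulates at the endpoint $[a_1,\dots,a_n]$ of $I_n$, while the mass inside each neighboring cylinder $I_n(a_1,\dots,a_{n-1},a_n\pm 1)$ accumulates at the endpoint $[a_1,\dots,a_{n-1},a_n\pm1]$ of that cylinder; checking which endpoints are shared shows that consecutive blocks of mass are separated by gaps of length comparable to $|I_n|$, and an induction over levels then gives $\dist\bigl(J_n,\ \text{mass outside } I_n\bigr)\gtrsim |I_n|$ with a uniform constant. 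With this (standard) verification inserted, your case analysis closes, and the two matching bounds give the stated formula.
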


We remark that the Hausdorff dimension of the set $F(\{s_n\}_{n=1}^{\infty};\ell)$ does not depend on $\ell$. The original version of Lemma \ref{lemma-FLWW} asks for the sequence $\{s_n\}$ and the number $\ell$ to be positive integers. However, a slight modification of the proof also works for real numbers.

In fact, Lemma \ref{lemma-FLWW} has a more general form.  Let $s:=\{s_n\}_{n\geq 1}$ and $t:=\{t_n\}_{n\geq 1}$ be two sequences of
real numbers such that $s_n\geq 1, t_n >1$ for all $n\geq 1$.
Consider the following set
\[
F(s,t):=\big\{x\in [0,1): s_n\leq a_n(x)<
s_nt_n, \ {\rm{for \ all}}\ n\geq 1\big\}.
\]
Naturally, we do need to assume that for each $n$ there is an integer between $s_n$ and $s_n t_n$, otherwise $F(s,t)$ would be empty.

\begin{lem}\label{lemma-general}
Assume that $F(s,t)\neq \emptyset$, $ s_n\to \infty$ as
$n\to \infty$, and
\[
\lim_{n\to\infty} \frac{\log (t_n-1)} {\log s_n} =0.
\]
Then
\begin{eqnarray*}
\dim_HF(s,t)=\left(2+\limsup_{n\to \infty}\frac{\log s_{n+1}} {\log s_1s_2\cdots s_n}\right)^{-1}.
\end{eqnarray*}
\end{lem}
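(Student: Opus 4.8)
The plan is to adapt the proof of Lemma \ref{lemma-FLWW}, the point being that the hypothesis $\lim_{n\to\infty}\frac{\log(t_n-1)}{\log s_n}=0$ renders the varying width $t_n$ logarithmically invisible, so that the dimension depends on $\{s_n\}$ in exactly the same way as in the fixed-width case. Write $Q_n=\prod_{k=1}^n s_k$ and let $N_n=\#\{a\in\N: s_n\le a<s_nt_n\}$ be the number of admissible values of $a_n$. First I would record the elementary consequences of the hypothesis. Since $N_n$ is comparable to $s_n(t_n-1)$, one gets $\log N_n=\log s_n+\log(t_n-1)+O(1)=\log s_n\,(1+o(1))$; in particular $N_n\to\infty$, so there is genuine branching at every large level. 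Likewise $\log t_n=o(\log s_n)$ in all three regimes ($t_n\to 1$, $t_n$ bounded, $t_n\to\infty$). Hence for any admissible $a_k\in[s_k,s_kt_k)$ we have $\log s_k\le\log a_k\le\log s_k+\log t_k$, and a Ces\`aro argument (valid since $s_n\to\infty$ forces $\log Q_n$ to grow faster than $n$) yields $\log\prod_{k=1}^n a_k=\log Q_n\,(1+o(1))$. Combined with Proposition \ref{p2.1} this gives $\log|I_n|=-2\log Q_n\,(1+o(1))$, the factor $2^n$ and the width contributing only $o(\log Q_n)$.

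Next, the upper bound. A naive cover of $F(s,t)$ by cylinders of order $n$ only yields $\dim_H\le 1/2$, so one must use an intermediate-scale cover, as in the Good--Jarn\'ik-type argument underlying Lemma \ref{lemma-FLWW}. Inside each admissible cylinder $I_n$ the admissible children $I_{n+1}$ are confined to a subinterval (the span) of length $\rho_n\asymp|I_n|\,(t_{n+1}-1)/s_{n+1}$, and $\rho_n\to 0$. Covering $F(s,t)\cap I_n$ by one interval of length $\rho_n$ for each cylinder of order $n$, I would estimate the resulting $\theta$-sum; taking logarithms and inserting the asymptotics above shows it is of size $Q_n^{\,1-\theta(2+\log s_{n+1}/\log Q_n)+o(1)}$. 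Choosing the order $n$ along a subsequence realizing $\limsup_n \log s_{n+1}/\log Q_n$ makes this sum tend to $0$ for every $\theta>(2+\limsup_n \log s_{n+1}/\log Q_n)^{-1}$, which gives the upper bound.

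For the lower bound I would place the uniform (Bernoulli-type) measure $\mu$ on $F(s,t)$, assigning mass $\prod_{k=1}^n N_k^{-1}$ to each admissible cylinder of order $n$, and apply the mass distribution principle. The work is to bound $\mu(B(x,r))$ from above at all scales $r$, not merely at the cylinder scales. Using the standard fact that adjacent children of $I_n$ are separated by gaps comparable to their own lengths, a ball of radius $r$ with $|I_{n+1}|\le r\le\rho_n$ meets at most $O(1+rs_{n+1}^2/|I_n|)$ children, whence $\mu(B(x,r))\lesssim (rs_{n+1}^2/|I_n|)\,\mu(I_n)/N_{n+1}$, while for $r\ge\rho_n$ one simply has $\mu(B(x,r))\lesssim\mu(I_n)$. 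Feeding in the asymptotics, the ratio $\log\mu(B(x,r))/\log r$ is minimized at $r\asymp\rho_n$, where it equals $(1+o(1))\,(2+\log s_{n+1}/\log Q_n)^{-1}$, so its liminf over $n$ is exactly $(2+\limsup_n \log s_{n+1}/\log Q_n)^{-1}$.

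The main obstacle is the lower bound, specifically making the estimate on $\mu(B(x,r))$ uniform over all points $x$ and all scales $r$ (including the non-cylinder, intermediate scales) while the window width $t_n$ varies with $n$. The hypothesis $\frac{\log(t_n-1)}{\log s_n}\to 0$ is precisely what pushes the width-dependent quantities $N_n$, $t_n$ and $\rho_n$ into the error term $o(\log Q_n)$; the delicate part is controlling these $o(1)$'s uniformly and propagating them through the Ces\`aro sums, so that the critical exponent is governed solely by $\limsup_n \log s_{n+1}/\log Q_n$, exactly as in Lemma \ref{lemma-FLWW}.
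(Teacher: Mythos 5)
Your proposal is correct and follows essentially the same route as the paper, which does not write out a proof of Lemma~\ref{lemma-general} but points to the lower-bound argument of Subsection~\ref{Dim-E-under}: the uniform measure on admissible cylinders, the local-dimension computation at the scales of the spans $D_n$ of admissible children (your $\rho_n$), and the standard intermediate-scale covering for the upper bound, with the hypothesis $\log(t_n-1)=o(\log s_n)$ absorbing all width-dependent factors into $Q_n^{o(1)}$ exactly as you describe. The only cosmetic slips (consecutive continued-fraction cylinders are actually adjacent, so the ``gaps'' remark is unneeded, and $\rho_n$ carries an extra harmless factor $t_{n+1}$) do not affect the argument.
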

The proof of Lemma \ref{lemma-general} is essentially contained in the proof of the lower bound of the dimension of $\underline{E}(\psi)$ in Subsection \ref{Dim-E-under}. So the details are left for the reader. A special case of Lemma \ref{lemma-general} can be found in \cite[Lemma 2.3]{LR}.

%\begin{lem}[\cite{FLWW}]\label{l2.7}
%$$\dim_H\left\{x\in [0,1): \limsup_{n\to \infty}\frac{\log q_n(x)}{n}=\infty\right\}= \frac{1}{2}.$$
%\end{lem}
The next lemma will be used to obtain the upper bound of the Hausdorff dimensions of $\overline{E}(\psi)$ and $\underline{E}(\psi)$.
\begin{lem}[\cite{Lu}, Main Theorem]\label{Luczak} 
For any $a>1, b>1$,
\begin{align*}
&\dim_H\left\{x\in [0,1]: a_n(x)\geq a^{b^n}, \forall n\geq 1\right\}\\
=&\dim_H\left\{x\in [0,1]: a_n(x)\geq
a^{b^n}, {\text{ for infinitely many } n}\right\}=\frac{1}{b+1}.
\end{align*} 
\end{lem}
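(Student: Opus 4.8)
The plan is to obtain the two dimension values by sandwiching. Since
\[
\{x: a_n(x)\ge a^{b^n}\ \forall n\}\subseteq \{x: a_n(x)\ge a^{b^n}\ \text{for infinitely many } n\},
\]
it suffices to bound the dimension of the smaller set below by $\tfrac1{1+b}$ and the dimension of the larger set above by the same value; the common value then forces all three quantities to equal $\tfrac1{1+b}$.

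For the lower bound I would apply Lemma \ref{lemma-FLWW} with $s_n:=a^{b^n}$ and any fixed $\ell\ge 2$, noting that $F(\{s_n\};\ell)\subseteq\{x:a_n(x)\ge a^{b^n}\ \forall n\}$ and $s_n\to\infty$. Since $\log s_n=b^n\log a$,
\[
\frac{\log s_{n+1}}{\log(s_1\cdots s_n)}=\frac{b^{n+1}}{\sum_{k=1}^n b^k}=\frac{b^{n+1}(b-1)}{b^{n+1}-b}\longrightarrow b-1,
\]
so Lemma \ref{lemma-FLWW} gives $\dim_H F=(2+(b-1))^{-1}=\tfrac1{1+b}$, whence both sets have dimension at least $\tfrac1{1+b}$.

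The substance is the upper bound. Writing $A_n:=\{x:a_n(x)\ge a^{b^n}\}$, the naive cover of the $\limsup$ set $\bigcap_N\bigcup_{n\ge N}A_n$ by order-$n$ basic intervals (or by the tail subinterval $J_{n-1}$ of $I_{n-1}$ on which $a_n\ge a^{b^n}$, of length $|J_{n-1}|\asymp |I_{n-1}|a^{-b^n}$) only yields $\dim_H\le 1/2$: summing $|I_{n-1}|^s$ over all earlier digits forces the factor $\sum_{t\ge1}t^{-2s}$, which diverges for $s<1/2$. Breaking this $1/2$-barrier is the crux, and I would do it by two devices. First, \emph{localize}: fix $\vec c\in\N^K$ and bound the dimension of $\{a_n(x)\ge a^{b^n}\ \text{i.o.}\}\cap I_K(\vec c)$, recovering the whole set as the countable union over $\vec c\in\N^K$ and using countable stability of $\dim_H$; this removes the divergent sum over the $K$ initial digits, which are now fixed. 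Second, organize the remaining cover by the \emph{first hitting time} $n>K$ with $a_n\ge a^{b^n}$, which forces $a_m<a^{b^m}$ for $K<m<n$. Each divergent tail $\sum_{t\ge1}t^{-2s}$ is thereby replaced by the controlled partial sum $\sum_{1\le t<a^{b^m}}t^{-2s}\asymp a^{(1-2s)b^m}$ (for $s<1/2$), so that, using Proposition \ref{p2.1}, the $s$-content contributed by first hitting time $n$ is
\[
\asymp |I_K(\vec c)|^s\, a^{-sb^n}\prod_{m=K+1}^{n-1}a^{(1-2s)b^m}.
\]
The exponent of $a$ here has leading coefficient $-s+\tfrac{1-2s}{b-1}=\tfrac{1-s(1+b)}{b-1}$, which is negative exactly when $s>\tfrac1{1+b}$; in that regime the doubly-exponential decay dominates, the sum over $n$ converges, and letting $s\downarrow\tfrac1{1+b}$ finishes the bound.

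The main obstacle is precisely this control of free digits, and two further points need care. The hitting digit $a_n$ is only bounded below, so I would decompose $\{a_n\ge a^{b^n}\}$ into dyadic blocks $[2^j,2^{j+1})$ and cover each by one interval of length $\asymp|I_{n-1}|2^{-j}$; since the first block dominates, this reproduces the $a^{-sb^n}$ factor above. Finally, the cover just described has fixed mesh, so to upgrade a finite $\mathcal{H}^s_\delta$-bound to $\dim_H\le s$ I would iterate the construction self-similarly inside each tail interval, using that the relative $s$-content generated beyond depth $d$ tends to $0$ as $d\to\infty$, thereby producing covers of arbitrarily small mesh with uniformly bounded $s$-content.
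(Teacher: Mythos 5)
The paper gives no proof of this lemma at all: it is quoted directly from \L uczak's paper \cite{Lu}, so there is no internal argument to compare yours with. Your lower bound is correct and is the natural one: taking $s_n=a^{b^n}$ in Lemma \ref{lemma-FLWW} gives $\limsup_n \log s_{n+1}/\log(s_1\cdots s_n)=b-1$ and hence dimension $1/(1+b)$ for the sub-Cantor set, which lower-bounds both sets. Your identification of the critical exponent in the upper bound, $-s+\frac{1-2s}{b-1}=\frac{1-s(1+b)}{b-1}$, is also the right heuristic and locates the threshold correctly.

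The upper bound nevertheless has a genuine gap, which you flag but do not resolve. Your localized first-hitting-time cover has finite $s$-content for $s>1/(1+b)$, but only at the single mesh $\asymp|I_K(\vec c)|\,a^{-b^{K+1}}$ coming from the term $n=K+1$; finiteness of $\mathcal{H}^s_\delta$ at one fixed scale $\delta>0$ is vacuous (every bounded set has it), so you must produce covers of the \emph{same} set at all scales with uniformly bounded content. Neither of your repairs achieves this. Re-localizing at depth $M>K$ changes the set being covered, and countable stability only returns a supremum over the deeper pieces, which is circular; summing the depth-$M$ bounds over all $\vec c'\in\N^{M-K}$ to recover the original set reintroduces the divergent factor $\bigl(\sum_t t^{-2s}\bigr)^{M-K}$. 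The self-similar iteration inside a tail interval $J_n$ fails concretely: to continue past the hitting index $n$ you must resolve the digit $a_n\ge a^{b^n}$ into sub-cylinders, and each dyadic block $[2^j,2^{j+1})$ contributes $2^j$ cylinders of length $\asymp|I_{n-1}|2^{-2j}$, so the continuation multiplies the $s$-content by $\sum_{j\gtrsim b^n\log_2 a}2^{j(1-2s)}=\infty$ for every $s\le 1/2$, i.e.\ on the whole range $s\in(1/(1+b),1/2)$ that you need. This obstruction is exactly why the known proofs (\L uczak, and the later generalizations of Wang and Wu) use a different decomposition for the upper bound: roughly, one splits according to whether $a_n(x)\ge\bigl(a_1(x)\cdots a_{n-1}(x)\bigr)^{c-1}$ for $1<c<b$. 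In that case the threshold is cylinder-dependent, the tail interval has length $\asymp|I_{n-1}|^{(c+1)/2}$, and the sum over \emph{all} of $\N^{n-1}$ already converges for $s>1/(1+c)$ with no constraint on earlier digits and with mesh tending to $0$ as $n\to\infty$; the complementary case, where the product $a_1\cdots a_{n-1}$ is itself doubly exponentially large, is handled separately. Some device of this kind is needed to close your argument.
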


\section{Proofs}
\subsection{Dimension of $\overline{E}(\psi)$}

We first calculate the Hausdorff dimension of $\overline{E}(\psi)$ as defined in (\ref{upper-set}).
%\begin{proof}[Dimension of $\overline{E}(\psi)$]
% Recall that
%$$ \overline{E}(\psi)=\Big\{x\in
%[0,1):\limsup_{n\to\infty}\frac{\log a_1(x)+\cdots+\log
%a_n(x)}{\psi(n)}=1\Big\}.
%$$
We will only give the proof for $1<b< \infty$. The case $b=\infty$ can be obtained by a standard limit procedure.

Upper bound: For $x\in \overline{E}(\psi)$,
 let $S_n(x):= \log a_1(x) + \cdots + \log a_n(x) $.
  Then for any $\delta>0$, there are infinitely many $n$'s such that
  \begin{equation}\label{eqn:klddkl}
  S_n(x) \geq \psi(n)(1-\delta).
  \end{equation}
  For each such $n$ there exists an $i\leq n$ such that
  \begin{equation}\label{eqn:plplpl}
  \log a_i(x) \geq \frac{\psi(n)}{2n}(1-\delta).
  \end{equation}
  Moreover, for infinitely many $i$'s we can find $n\geq i$ satisfying \eqref{eqn:klddkl} such that \eqref{eqn:plplpl} holds. Indeed, if $i_0$ was the largest $i$ with this property, for every $n>i_0$ satisfying \eqref{eqn:klddkl} we would have
  \[
  S_n(x) = S_{i_0}(x) + \sum_{k=i_0+1}^n \log a_k(x) < S_{i_0}(x) + \frac {1-\delta} 2 \psi(n)
  \]
  and hence,
  \[
  \psi(n) < 2(1-\delta)^{-1} S_{i_0}(x).
  \]
Thus $\psi(n)$ for $n$ satisfying \eqref{eqn:klddkl} would be uniformly bounded. This contradicts the assumption $\psi(n)/n \to \infty$. %\marginpar{add one sentence}

  By the definition (\ref{def-bB}) of $b$, for any small $\epsilon >0$ ($\epsilon<(b-1)/100$ is enough) we have $\psi(n)>(b-\epsilon)^n>1$ and $(b-\epsilon)^n (1-\delta)/2n > (b-2\epsilon)^n$ for all $n$ large enough. Thus, by (\ref{eqn:plplpl}) there are infinitely many $i$'s, such that
  \[\log a_i(x)> \frac{(b-\epsilon)^n}{2n}(1-\delta)>(b-2{\epsilon})^n \geq (b-2{\epsilon})^i. \]
 It then follows that the set $\overline{E}(\psi)$ is included in the set
  \[
  \left\{x\in [0,1]: a_i(x) >e^{(b-2{\epsilon})^i}, \ {\text{ for infinitely many } n}\right\}.
  \]
By Lemma \ref{Luczak},
% Then for any $\epsilon>0$, there are infinitely many $i$ such that
% \[ a_i \geq e^{(b-\epsilon)^i}. \]
 the Hausdorff dimension of $\overline{E}(\psi)$ is bounded by $1/\big(1+(b-2\epsilon)\big)$ from above. Letting $\epsilon \to 0$, we obtain the upper bound.

 \medskip
 Lower bound: We will construct a Cantor type subset of $\overline{E}(\psi)$. %The proof also shows the set $\overline{E}(\psi)$ is always non-empty. \marginpar{two sentences added}
%Assume that $\psi(1)$ is large so that $\psi(1)^{b-1}\geq 2$.

We want to construct a sequence $\{c_n\}_{n\geq 1}$ of positive real numbers %\marginpar{\small M: large changes through the subsection} 
such that %\marginpar{changes}
\begin{equation}\label{subset-Ec}
\limsup_{n\to\infty}\frac{\log c_1+\cdots+\log c_n}{\psi(n)}=1,
\end{equation}
and
\begin{equation}\label{dim-cal}
\limsup_{n\to\infty}\frac{\log c_{n+1}}{\log c_1+\cdots+\log
c_n}\leq b-1+\epsilon.
\end{equation}
The condition (\ref{subset-Ec}) will be used to construct points in $\overline{E}(\psi)$ while the condition (\ref{dim-cal}) will be helpful for estimating the Hausdorff dimension.

Let us begin by defining an auxiliary function
\[
\phi(n) = \min_{k\geq n} \psi(k).
\]
As $\psi(n)\to \infty$, $\phi(n)$ is well defined for all $n$. Obviously, $\phi\leq\psi$ and $\phi$ is a nondecreasing function. One can easily check that there exist infinitely many $n$'s for which $\phi(n)=\psi(n)$. One more property that we will soon use is
\begin{equation} \label{eqn:phi}
\phi(n) \neq \psi(n) \implies \phi(n) = \min_{k\geq n} \psi(k) = \min_{k>n} \psi(k) = \phi(n+1).
\end{equation}

Let $c_1=e^{\phi(1)}$ and $$
c_2=\min\left\{\frac{e^{\phi(2)}}{c_1}, \ c_1^{b-1+\epsilon}\right\}.
$$Assume that $c_{n-1}$ has already been well defined, then set
\begin{align}\label{def-cn}
c_{n}=\min\left\{\frac{e^{\phi(n)}}{\prod_{k=1}^{n-1} c_k}, \
\prod_{k=1}^{n-1} c_k^{b-1+\epsilon}\right\}.
\end{align} %\marginpar{change $n+1$ to $n$}
Observe that as $\phi$ is nondecreasing, $c_n\geq 1$ for all $n\geq 1$.

%Now for all $n\geq 1$, take $c_n=\lfloor\tilde{c}_n\rfloor+2$, where $\lfloor\cdot \rfloor$ stands for the integer part.
We can check that
\begin{equation*}
\limsup_{n\to\infty}\frac{\log c_{n+1}}{\log c_1+\cdots+\log
c_n}\leq \limsup_{n\to\infty}\frac{\log \left(\prod_{k=1}^n c_k^{b-1+\epsilon}\right) }{\log \prod_{k=1}^n c_k}=b-1+\epsilon.
\end{equation*}
Thus (\ref{dim-cal}) holds.

%\begin{equation}\label{dim-cal}
%\limsup_{n\to\infty}\frac{\log c_{n+1}}{\log c_1+\cdots+\log
%c_n}\leq b-1+\epsilon.
%\end{equation}
To prove \eqref{subset-Ec} we will need several steps.
By the definition (\ref{def-bB}) of $b$, we claim that %\marginpar{change} %there exist infinitely many $n$, such that
\begin{align}\label{c_n-inf}
c_{n}=\frac{e^{\phi(n)}}{\prod_{k=1}^{n-1} c_k} \quad \text{ for infinitely many } n.
\end{align}
Indeed, if it is not true then for some $N$ and for all $n\geq N$ we have
\begin{equation} \label{eqn:kssk1}
c_{n}=\prod_{k=1}^{n-1} c_k^{b-1+\epsilon}
\end{equation}
and
\begin{equation} \label{eqn:kssk2}
e^{\phi(n)} > \prod_{k=1}^{n-1} c_k^{b+\epsilon}.
\end{equation}
The formula \eqref{eqn:kssk1} implies
\[
\prod_{k=1}^n c_k = \left(\prod_{k=1}^N c_k\right)^{(b+\epsilon)^{n-N}},
\]
which together with \eqref{eqn:kssk2} and $\phi \leq \psi$ is in contradiction with \eqref{def-bB}.

Observe now that by \eqref{eqn:phi} and \eqref{def-cn}, if the equality in \eqref{c_n-inf} holds for some $n$ such that $\phi(n) \neq \psi(n)$ then it will hold for $n+1$ as well (and $c_{n+1}=1$). Since $\phi(n)=\psi(n)$ infinitely often, repeating this argument until we get to some $n+k$ for which $\phi(n+k)=\psi(n+k)$, we prove
\begin{align}\label{c_n-inf+}
c_{n}=\frac{e^{\psi(n)}}{\prod_{k=1}^{n-1} c_k} \quad \text{ for infinitely many } n.
\end{align}

Denoting by $\{n_{j}\}$ the sequence of numbers satisfying \eqref{c_n-inf+}, we get
\begin{equation*}
\limsup_{n\to\infty}\frac{\log c_1+\cdots+\log c_n}{\psi(n)}\geq  \limsup_{j\to\infty}\frac{\log c_1+\cdots+\log c_{n_j}}{\psi(n_j)} =1.
\end{equation*}
 On the other hand, by \eqref{def-cn},
\begin{equation*}
\limsup_{n\to\infty}\frac{\log c_1+\cdots+\log c_n}{\psi(n)}\leq  \limsup_{n\to\infty}\frac{\log c_1+\cdots+\log c_n}{\phi(n)} \leq 1.
\end{equation*} %\marginpar{why? M: the answer isn't short...}
Hence (\ref{subset-Ec}) holds.

Define
$$ E(\{c_n\}):=\{x\in [0,1): c_n \leq a_n(x)<2 c_n, \ {\rm{for \ all}}
 \ n\geq 1\}.
$$
Since $\psi(n)/n \to\infty$ as $n\to\infty$, by \eqref{subset-Ec}, $E(\{c_n\}) \subset\overline{E}(\psi)$.

Now we would like to apply Lemma \ref{lemma-FLWW} to estimate the Hausdorff dimension of $E(\{c_n\})$. %\marginpar{add one sentence} 
However, observe that \eqref{subset-Ec} does only imply that $\limsup c_n = \infty$, but not that $c_n\to \infty$. For example, if $\psi$ (or $\phi$) has a long plateau then we can find many $n$'s with $c_n=1$, and hence for a function $\psi$ with infinitely many long plateaux there might exist a subsequence $c_{n_i} \equiv 1$.
 On the other hand, to apply Lemma \ref{lemma-FLWW} we need the condition $c_n\to \infty$ as $n\to \infty$. So, some modifications on the subset $E(\{c_n\})$ are needed.

 By the condition that $\psi(n)/n\to \infty$
as $n\to\infty$, we can choose an increasing sequence $\{n_k\}_{k=1}^{\infty}$
such that for each $k\geq 1$ 
$$ \frac{\psi(n)}{n}\geq k^2, \
{\rm{when}}\ n\geq n_k.
$$
Take $\alpha_n=2$ if $1\leq n<n_1$ and
$$  \alpha_n=k+1, \ {\rm{when}} \ n_k\leq n<n_{k+1}.
$$
Let $k(n)$ be such that $n_{k(n)} \leq n < n_{k(n)+1}$.
Then
\[
\lim_{n\to\infty}\frac{\log \alpha_1+\cdots+\log
\alpha_n}{\psi(n)}\leq \lim_{n\to\infty}\frac {n \cdot (k(n)+1)} {n \cdot k(n)^2} =0
\]
and
\[\lim_{n\to\infty}\frac{\log
\alpha_{n+1}}{\log \alpha_1+\cdots+\log \alpha_n}\leq \lim_{n\to\infty}\frac {\log (n+1)} {n \cdot \log 2}  =0.
\]

 Since $c_n\geq 1$ and $\alpha_n\geq 2$ for all $n\geq 1$,
we have $$\log c_n\leq \log ( c_n+\alpha_n )\leq \log c_n+2\log
\alpha_n \quad \forall n\geq 1.$$%\marginpar{M} 
So, by taking $s_n= c_n+\alpha_n $ for each $n\geq 1$,
we get $s_n\to\infty$ as $n\to
\infty$ and
$$
\limsup_{n\to\infty}\frac{\log s_1+\cdots+\log
s_n}{\psi(n)}=1.
$$
Define $$ E(\{s_n\}):=\{x\in [0,1):s_n\leq a_n(x)<2s_n, \ {\rm{for \
all}}
 \ n\geq 1\}.
$$ Then $$E(\{s_n\})\subset\overline{E}(\psi).$$ As $s_n\to\infty$ as $n\to
\infty$, by Lemma \ref{lemma-FLWW}, we have $$ \dim_HE(\{s_n\})=\left(2+\limsup_{n\to \infty}\frac{\log s_{n+1}} {\log s_1+\cdots+\log s_n}\right)^{-1}.
$$
Note that \begin{eqnarray*}&& \limsup_{n\to\infty}\frac{\log
s_{n+1}}{\log s_1+\cdots+\log
s_n}\\&=&\limsup_{n\to\infty}\frac{\log
 (c_{n+1}+\alpha_{n+1}) }{\log  (c_1+\alpha_1)+\cdots+\log (c_n+\alpha_n)}\\
&\leq &\limsup_{n\to\infty}\frac{\log c_{n+1}+2\log\alpha_{n+1}}{\log
 (c_1+\alpha_1)+\cdots+\log (c_n+\alpha_n)}\\
&\leq &\limsup_{n\to\infty}\frac{\log c_{n+1}}{\log c_1+\cdots+\log
c_n}+\limsup_{n\to\infty}\frac{2\log\alpha_{n+1}}{\log
\alpha_1+\cdots+\log \alpha_n}\\
&\leq &b-1+\epsilon.
\end{eqnarray*}Hence, $$
\dim_H\overline{E}(\psi)\geq \dim_HE(\{s_n\})\geq \frac{1}{b+1+\epsilon}.$$

%We have also proved that the set $\overline{E}(\psi)$ is always nonempty.
%\end{proof}
%$$\hfill $\Box$

\medskip
\subsection{Dimension of $\underline{E}(\psi)$}\label{Dim-E-under}
%Recall that
%$$ \underline{E}(\psi)=\Big\{x\in
%[0,1):\liminf_{n\to\infty}\frac{\log a_1(x)+\cdots+\log
%a_n(x)}{\psi(n)}=1\Big\}.
%$$
%\marginpar{no need to recall}
As in the calculation of the Hausdorff dimension of $\overline{E}(\psi)$, we will only give the proof for $1<B< \infty$ and the easy case $B=\infty$ is left for the reader.

% \begin{proof}[Proof of Theorem \ref{limsup}]
Upper bound: By the definition (\ref{def-bB}) of $B$, for any $\epsilon>0$, there is a sequence $\{n_i\}$ such that
\[
  \psi(n_i)> (B-\epsilon)^{n_i}.
\]
Denoting $S_n(x)= \log a_1(x) + \cdots + \log a_n(x)$, for all $x\in \underline{E}(\psi)$, for any $\delta>0$, we have
$$S_n(x) \geq \psi(n)(1-\delta), \ \forall n\geq 1.$$
Thus
\[S_{n_i}(x) \geq (B-\epsilon)^{n_i}(1-\delta).\]
Then there exists $j\leq n_i$ such that
\begin{equation} \label{eqn:somet}
\log a_{j}(x) \geq (B-\epsilon)^{n_i}(1-\delta)/2n_i > (B-2\epsilon)^{j}.
\end{equation}
 By the same argument as in the previous subsection, we have infinitely many such $j$'s for which we can find $n_i$ satisfying \eqref{eqn:somet}.  Thus by Lemma \ref{Luczak}, the Hausdorff dimension of $\underline{E}(\psi)$ is bounded by $1/(1+(B-2\epsilon))$ from above.
 The upper bound then follows by letting $\epsilon \to 0$.

 \medskip
 Lower bound: As in the previous subsection, a Cantor type subset of $\underline{E}(\psi)$ will be constructed.
 %Thus the following proof also shows that the set $\underline{E}(\psi)$ is always nonempty.

For any $\epsilon>0$, define
\begin{align}\label{def-Ai}
A_i= \sup_{n\geq i}\exp\{\psi(n)(B+\epsilon)^{i-n}\}.
\end{align}
Since $\limsup_{n\to\infty}\frac{\log \psi(n)}{n}=\log B$, we have for $n$ large enough
\[
\psi(n)\leq (B+\epsilon/2)^n.
\]
This implies
\[
\psi(n)(B+\epsilon)^{i-n} \leq (B+\epsilon/2)^n  (B+\epsilon)^{i-n} \to 0 \quad (n\to\infty).
\]
Hence in the definition (\ref{def-Ai}) of $A_i$ the supremum is achieved.

Denote by $t_i\geq i$ the smallest number for which $A_i = \exp\{\psi(t_i)(B+\epsilon)^{i-t_i}\}$.
Figure \ref{fig} shows a way to find $t_i$ and $A_i$.

%In fact, consider the curve of the function $x\mapsto \log \psi(x)$ and take lines of slop $\log (B+\epsilon)$ which are tangent to the curve.  For each $i$, the supremum in (\ref{def-Ai}) is obtained at the first tangent point $t_i$ after $i$.
%where a line of tangent $\log (B+\epsilon)$ will firstly touch the curve $\log \psi(n)$.
%\marginpar{to be modified}

We remark that such defined $A_i$ is the smallest function satisfying %\marginpar{Do we need this?}
\begin{equation}\label{prop-A}
A_{i+1} \leq A_i^{B+\epsilon} \quad \text{and} \quad A_i\geq e^{\psi(i)}.
\end{equation}
%\vskip 18pt
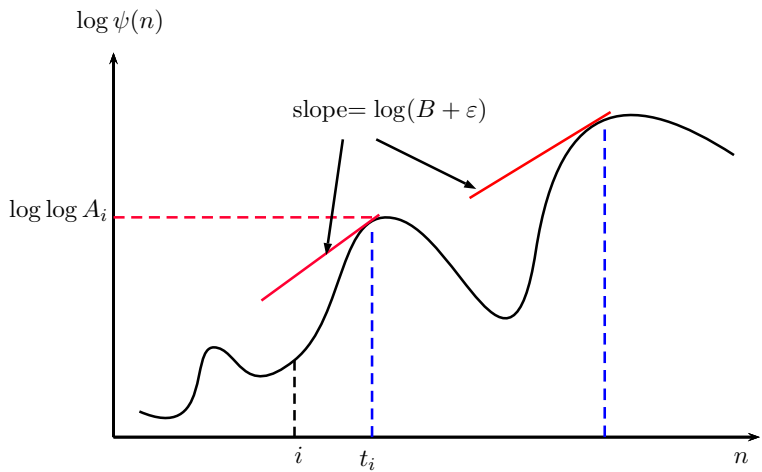
\begin{figure}
\psscalebox{0.85 0.85} % Change this value to rescale the drawing.
{
\begin{pspicture}(0,-3.5759218)(11.479986,3.5759218)
\definecolor{colour0}{rgb}{1.0,0.0,0.2}
\rput(1.6129032,-3.0597925){\psaxes[linecolor=black, linewidth=0.04, tickstyle=full, axesstyle=axes, labels=none, ticks=none, dx=1.0cm, dy=1.0cm]{->}(0,0)(0,0)(10,6)}
\psbezier[linecolor=black, linewidth=0.04](2.0129032,-2.6597927)(3.1629033,-3.1597927)(2.7795699,-1.6597927)(3.1629033,-1.6597927)(3.5462365,-1.6597927)(3.600238,-2.5222912)(4.4129033,-1.8597926)(5.2255683,-1.197294)(5.0773907,0.61137044)(6.012903,0.34020737)(6.9484158,0.0690443)(7.762903,-2.6597927)(8.146236,-0.15979263)(8.52957,2.3402073)(9.67957,2.3402073)(11.212903,1.3402073)
\psline[linecolor=red, linewidth=0.04](9.303715,2.0111241)(7.1220913,0.6692905)
\psline[linecolor=colour0, linewidth=0.04](5.724642,0.4100824)(3.9011645,-0.92966765)
\psline[linecolor=blue, linewidth=0.04, linestyle=dashed, dash=0.17638889cm 0.10583334cm](5.612903,0.14020738)(5.612903,-3.0597925)
\psline[linecolor=blue, linewidth=0.04, linestyle=dashed, dash=0.17638889cm 0.10583334cm](9.212903,1.7402073)(9.212903,-3.0597925)(9.212903,-3.0597925)
\psline[linecolor=black, linewidth=0.04, linestyle=dashed, dash=0.17638889cm 0.10583334cm](4.4129033,-1.8597926)(4.4129033,-3.0597925)(4.4129033,-3.0597925)
\rput[bl](11.212903,-3.4597926){$n$}
\rput[bl](4.4129033,-3.4597926){$i$}
\psline[linecolor=black, linewidth=0.04](1.6129032,0.3687788)(1.6129032,0.3687788)
\psline[linecolor=colour0, linewidth=0.04, linestyle=dashed, dash=0.17638889cm 0.10583334cm](5.612903,0.3687788)(1.6129032,0.3687788)(1.6129032,0.3687788)
\rput[bl](1.0414747,3.2259216){$\log \psi(n)$}
\rput[bl](0.0,0.2950461){$\log\log A_i$}
\rput[bl](4.387097,1.8434331){slope$=\log(B+\epsilon)$}
\psline[linecolor=black, linewidth=0.04, arrowsize=0.05291666666666667cm 2.0,arrowlength=1.4,arrowinset=0.0]{->}(5.16129,1.5853686)(4.903226,-0.22108296)
\psline[linecolor=black, linewidth=0.04, arrowsize=0.05291666666666667cm 2.0,arrowlength=1.4,arrowinset=0.0]{->}(5.677419,1.5853686)(7.225806,0.8111751)
\rput[bl](5.419355,-3.5759218){$t_i $}
%\rput[bl](9.032258,-3.5759218){$n_{j+1}$}
\end{pspicture}
}
\caption{Finding $A_i$.}\label{fig}
\end{figure}

%\vskip 18pt

Let
\[
Z:=\liminf_{n\to\infty}\frac{\sum_{i=1}^n\log A_i}{\psi(n)}.
\]
%Since for all $i\in \mathbb{N}$, $A_i\geq \exp\{\psi(i)(B+\epsilon)^{i-i}\} = e^{\psi(i)}$, 
By (\ref{prop-A}),  we have
\[
Z\geq \liminf_{n\to\infty}\frac{\log A_n}{\psi(n)}\geq 1.
\]

We will use these $A_i$ and $Z$ to construct our Cantor type subset. Before the construction, let us prove the following proposition. %\marginpar{L}
\begin{prop}\label{prop-finite}
We have $Z<\infty$.
\end{prop}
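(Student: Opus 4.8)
The plan is to exhibit a sequence of indices $p\to\infty$ along which $\bigl(\sum_{i=1}^{p}\log A_i\bigr)/\psi(p)$ stays bounded; since $Z$ is a $\liminf$, this forces $Z<\infty$. Write $q:=B+\epsilon>1$ and $h(m):=\psi(m)q^{-m}$, so that $\log A_i=q^i\sup_{m\ge i}h(m)$ and $t_i$ is precisely the smallest maximiser of $h$ on $[i,\infty)$. As already noted, $\psi(m)\le(B+\epsilon/2)^m$ eventually, hence $h(m)\to0$ and every supremum is attained. The indices I will use are the ``peaks'' $p=t_i$, at which $h(p)=\sup_{m\ge p}h(m)$ and therefore $\log A_p=q^p h(p)=\psi(p)$; into $t_{(\cdot)}$ I will feed a carefully chosen sequence of base points.

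The first ingredient is the existence of infinitely many \emph{record} base points $n_0$ for which the whole past sum is controlled by the last term, namely
\[
P(n_0):=\sum_{m=1}^{n_0}\psi(m)\le c^{-1}\psi(n_0),
\]
where $c\in(0,1)$ is fixed so small that $\gamma:=(1-c)^{-1}<B$ (possible since $B>1$). I expect to obtain this by contradiction: if such $n_0$ occurred only finitely often, then $\psi(n)<cP(n)$ for all large $n$, i.e.\ $P(n)<\gamma P(n-1)$, whence $\psi(n)\le P(n)\le C\gamma^{n}$ and $\limsup_n\frac{\log\psi(n)}{n}\le\log\gamma<\log B$, contradicting the definition of $B$.

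The second step passes from such an $n_0$ to the peak $p:=t_{n_0}\ge n_0$. Because $h(p)=\sup_{m\ge n_0}h(m)$ dominates $h$ on all of $[n_0,\infty)$, I get $\psi(p)\ge\psi(n_0)$ and, for every $m$ with $n_0<m\le p$, the bound $\psi(m)\le\psi(p)q^{m-p}$; summing the resulting geometric series gives $\sum_{n_0<m\le p}\psi(m)\le\frac{q}{q-1}\psi(p)$, so $P(p)\le\bigl(c^{-1}+\frac{q}{q-1}\bigr)\psi(p)$. Finally, at the peak $p$ one has $\sup_{m\ge i}h(m)=\sup_{i\le m\le p}h(m)$ for every $i\le p$ (future values are $\le h(p)$), and a direct interchange of summations yields
\[
\sum_{i=1}^{p}\log A_i=\sum_{i=1}^{p}q^i\sup_{i\le m\le p}h(m)\le\sum_{m=1}^{p}h(m)\sum_{i=1}^{m}q^i\le\frac{q}{q-1}P(p).
\]
Combining the two bounds controls the ratio at $p$ by $\frac{q}{q-1}\bigl(c^{-1}+\frac{q}{q-1}\bigr)$, uniformly in $n_0$; as $n_0\to\infty$ we have $p=t_{n_0}\to\infty$, giving $Z<\infty$.

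The step I expect to be most delicate is controlling the \emph{tail}: a priori $\log A_i$ for $i\le n$ can be inflated by large values of $\psi$ occurring after $n$, and since $h(n)\to0$ no global bound on $\sup_m h(m)$ is of any use. The device that removes this difficulty is to evaluate the sum only at peaks $p=t_{n_0}$, where by construction the future of $h$ is dominated by $h(p)$, so the forward contributions disappear and the clean windowed estimate above applies; the mild art lies in choosing the record base points $n_0$ (rather than the peaks directly) so that the past sum $P(\cdot)$ is simultaneously tamed.
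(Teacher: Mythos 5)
Your proof is correct and follows essentially the same route as the paper's: you first show that $\psi(n)$ is a definite fraction of $S_n\psi:=\sum_{m\le n}\psi(m)$ infinitely often (the content of the paper's Lemma \ref{lem-finite}, which you prove by a direct recursion on the partial sums instead of Stolz--Ces\`aro), and then pass to the peak indices $p=t_{n_0}$, where $\log A_p=\psi(p)$ and where both $S_p\psi/\psi(p)$ and $\sum_{i\le p}\log A_i/S_p\psi$ are controlled by the same geometric constant $\tfrac{B+\epsilon}{B+\epsilon-1}$ that the paper extracts from its block decomposition $\{\ell_j\}$ in Lemma \ref{lem-subseq} and \eqref{eqn:lsssl}. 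The only real variation is that you replace the blockwise estimate over the intervals $(\ell_{j-1},\ell_j]$ by a single Fubini-type interchange of the double sum $\sum_i q^i\sup_{i\le m\le p}h(m)\le\sum_m h(m)\sum_{i\le m}q^i$, a mild streamlining rather than a different idea.
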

\begin{proof}
%Since $\limsup_{n\to\infty}\frac{\log \psi(n)}{n}=\log B$, we have for $n$ large enough,
%\[
%\psi(n)\leq (B+\epsilon/2)^n.
%\]
%This implies
%\[
%\psi(n)(B+\epsilon)^{i-n} \leq (B+\epsilon/2)^n  (B+\epsilon)^{i-n} \to 0.
%\]
%Hence in the definition of $A_i$ the supremum is achieved. Let us denote $t_i\geq i$ the smallest number for which $A_i = \exp\{\psi(t_i)(B+\epsilon)^{i-t_i}\}$.
%\marginpar{move a paragraph to the definition of $A_i$}
Remark that for many consecutive $i$'s the $t_i$ will be the same. More precisely, $t_{i}=t_{i+1}=\cdots=t_{t_i}$.  Let $\{\ell_j\}$ be the sequence of all $t_i$'s in the increasing order, without repetitions. % (see Figure \ref{fig}).
Notice that for these $\ell_j$, we have
\[
\log A_{\ell_j}=\psi(\ell_j),
\]
and for $k\in (\ell_{j-1}, \ell_j]$,
\[
\log A_k= \psi(\ell_j)(B+\epsilon)^{k-\ell_j}.
\]
Thus for $0<\epsilon <B-1$
\begin{equation} \label{eqn:lsssl}
\sum_{k=\ell_{j-1}+1}^{\ell_j} \log A_k =\sum_{k=\ell_{j-1}+1}^{\ell_j}\psi(\ell_j)(B+\epsilon)^{k-\ell_j} \leq \frac{B+\epsilon}{B+\epsilon-1} \cdot \psi(\ell_j).
\end{equation}
%for $C=(B+\epsilon)/(B+\epsilon-1)$.

%Suppose $\{\ell_j\}$ are defined as above.
Denote $S_n\psi:= \sum_{k=1}^{n}\psi(k)$. Proposition \ref{prop-finite} follows directly from the following two lemmas.
%\end{proof}
\begin{lem}\label{lem-finite}
% The following limit is finite:
We have
\[
\liminf_{n\to\infty} \frac{S_n\psi}{\psi(n)}<\infty.
\]
\end{lem}
\begin{proof}
For $\epsilon>0$, we  will show that there exist infinitely many $n$, such that $\psi(n)> \epsilon S_{n-1}\psi$. If this was not true, then for all large $n$ we would have
\[
S_n\psi = S_{n-1}\psi +\psi(n) \leq (1+\epsilon)S_{n-1}\psi.
\]
Thus by Stolz-Ces\`aro Theorem%we would have
\[
\limsup_{n\to\infty} \frac{\log S_n\psi}{n} \leq \limsup_{n\to\infty} \frac{\log S_n\psi-\log S_{n-1}\psi}{n-(n-1)} \leq \log (1+\epsilon),
\]
which is impossible since we have
\[
\limsup_{n\to\infty} \frac{\log S_n\psi}{n} \geq\limsup_{n\to\infty} \frac{\log \psi(n)}{n}=B>\log (1+\epsilon).
\]
Denote by $\{n_j\}$ a sequence such that $\psi(n_j)> \epsilon S_{n_j-1}\psi$. Then
\[
\frac{S_{n_j}\psi}{\psi(n_j)}=\frac{S_{n_j-1}\psi +\psi(n_j)}{\psi(n_j)} \leq 1+\frac{1}{\epsilon}<\infty,\]
and the conclusion follows.
\end{proof}

\begin{lem}\label{lem-subseq}
The following limit is finite:
\[
\liminf_{j\to\infty} \frac{S_{\ell_j}\psi}{\psi(\ell_j)}<\infty.
\]
\end{lem}
\begin{proof}
Denote $L:=\liminf_{n\to\infty} \frac{S_n\psi}{\psi(n)}$.
Fix $0<\epsilon <B-1$ and let $\{m_k\}$ be the sequence such that %\marginpar{L}
\[
\frac{S_{m_k}\psi}{\psi(m_k)} \leq L+\epsilon.
\]
Each $m_k$ is in some $(\ell_{j-1}, \ell_j]$, hence for this $j$
\begin{align*}
S_{\ell_j}\psi =S_{m_k}\psi+ \sum_{i=m_k+1}^{\ell_j}\psi(i)&\leq (L+\epsilon) \psi(m_k) +\sum_{i=m_k+1}^{\ell_j}\psi(i)\\
&\leq (L+\epsilon) \sum_{i=m_k}^{\ell_j}\psi(i).
\end{align*}
By definition of $\ell_j$, for $i\in [m_k, \ell_j] \subset (\ell_{j-1}, \ell_j]$
\[
\psi(i) \leq \frac{1}{(B+\epsilon)^{\ell_j-i}}\psi(\ell_j),
\]
Thus
\[
S_{\ell_j}\psi  \leq \frac{B+\epsilon}{B+\epsilon-1} \cdot (L+\epsilon) \psi(\ell_j).
\]
The result then follows.
\end{proof}
% Lemma \ref{lem-subseq}. %\qed
%\end{proof}
By \eqref{eqn:lsssl}
\[
\sum_{k=1}^{\ell_j} \log A_k \leq C \cdot S_{\ell_j}\psi.
\]
Hence
\[
Z \leq \liminf_{j\to\infty} \frac {\sum_{k=1}^{\ell_j} \log A_k} {\psi(\ell_j)} \leq C \frac {S_{\ell_j}\psi} {\psi(\ell_j)} < \infty,
\]
which completes the proof of Proposition \ref{prop-finite}
\end{proof}

\medskip
Now we continue the construction of the subset and the estimation of the lower bound.
%Thus \[A_i \leq \exp\{(B+\epsilon)^{i}\}.\]
%
%Let $K=\exp\{\frac{1}{1-1/(B+\epsilon)}\}$.
%Let $\epsilon_i$ be a sequence decreasing to $0$. %(We will see $\epsilon_i=1/i$ are OK.)

Choose a sequence $\epsilon_i\to 0$ such that
\begin{align}\label{assum-1}
 \lim_{n\to\infty}\frac{\sum_{i=1}^n \log (1\pm \epsilon_i)}{\psi(n)}=0,
 \end{align}
 \begin{align}\label{assum-2}
\lim_{n\to\infty} \frac{|\sum_{i=1}^n \log (2\epsilon_i)|}  {\log A_{n+1}}=0,
 \end{align}
 and
\begin{align}\label{assum-3}
\text{for each $i$} \quad  W_i\cap \mathbb{N}\neq \emptyset,
 \end{align}
where \begin{align*}%\label{def-W_i}
 W_i:=[A_i^{1/Z}(1-\epsilon_i), \ A_i^{1/Z}(1+\epsilon_i)].
 \end{align*}

Observe that the condition (\ref{assum-2}) implies the condition (\ref{assum-3}) for all $i$ large enough. Observe also that for large $i$, (\ref{assum-2}) and (\ref{assum-3}) say only that $\epsilon_i$ is greater than some negative power of $A_i$ (which is increasing superexponentially fast) while the condition (\ref{assum-1}) is satisfied for all decreasing sequences, so there is no problem to find such a sequence $\{\epsilon_i\}$ satisfying all the conditions (\ref{assum-1})-(\ref{assum-3}).

Denote by $E$ the set of numbers $x$ such that for all $i$, $a_i(x)$ is in the interval $W_i$. By (\ref{assum-3}), $E$ is nonempty.
By (\ref{assum-1}), for all $x\in E$
\[
   \liminf_{n\to\infty} \frac{\sum_{j=1}^n\log a_j(x)}{\psi(n)}= \liminf_{n\to\infty} \frac{\frac{1}{Z}\sum_{j=1}^n\log A_j}{\psi(n)}=1.
\]
So $E \subset \underline{E}(\psi)$. 

Now we will estimate the Hausdorff dimension of $E$ from below. To this end, we define a probability measure $\mu$ on $E$. For each position, we distribute the probability evenly.
That is, for each possible $a_i$, we give the probability
\[
p_i =\frac{1}{1+|[\lceil A_i^{1/Z}(1-\epsilon_i)\rceil, \lfloor A_i^{1/Z}(1+\epsilon_i)\rfloor]|}\approx \frac{1}{2\epsilon_i A_i^{1/Z}}.
\]
Here and in what follows, we follow \cite{JR}. For simplicity we give only the main term of the calculations.

By the probability distribution, for each basic interval $I_n=I_n(a_1,\dots a_n)$, we have
\[
\mu(I_n)=\prod_{i=1}^n p_i \approx \prod_{i=1}^n (2\epsilon_i A_i^{1/Z})^{-1}.
\]
By (\ref{length-int}),
\[
|I_n| \approx \prod_{i-1}^n (A_i^{1/Z})^{-2}.
\]
To calculate the local dimension of $x\in E$ we will use a smaller interval $D_n$ included in $I_n$:
\[
  D_n=\cup_{a_{n+1}\geq A_{n+1}^{1/Z}(1-\epsilon_{n+1})} I_{n+1}(a_1, \cdots, a_n a_{n+1}).
\]
Since $a_i\in [A_i^{1/Z}(1-\epsilon_i), \ A_i^{1/Z}(1+\epsilon_i)]$ and $A_i$ grows super-exponentially, the Hausdorff dimension will be determined by calculating the local dimension
\[
\liminf_{n\to\infty} \frac{\log \mu(D_n)}{\log |D_n|}.
\]
(See Section 4 of Jordan and Rams \cite{JR}.)

On the one hand,
\[
-\log \mu(D_n)=-\log \mu(I_n) =\sum_{i=1}^n \log (2\epsilon_i) + \frac{1}{Z}\sum_{i=1}^n \log A_i.
\]
By the property that $A_{i+1}\leq A_i^{B+\epsilon}$, we deduce that
for big $n$,
\begin{align}\label{mu-D_n}
\begin{split}
-\log \mu(D_n) &\geq \sum_{i=1}^n \log (2\epsilon_i)+ \frac{1}{Z} \sum_{i=1}^n \frac{1}{(B+\epsilon)^i} \log A_{n+1} \\
&\approx \frac{1}{Z(B+\epsilon-1)} \log A_{n+1}.
\end{split}
\end{align}
The last $\approx$ follows from (\ref{assum-2}).

On the other hand, the length of the interval $D_n$ is
\[
|D_n|\approx |I_n|\cdot A_{n+1}^{-1/Z}.
\]
Thus %\marginpar{$\leq$}
\[
-\log |D_n|\approx -\log |I_n| + \frac{1}{Z}\log A_{n+1} \approx -2 \log \mu(D_n) +\frac{1}{Z}\log A_{n+1}.
\]
Hence %\marginpar{$\geq$}
\begin{eqnarray*}
\frac{-\log \mu(D_n)}{-\log |D_n|} &\approx& \frac{-\log \mu(D_n)}{  -2 \log \mu(D_n) +\frac{1}{Z}\log A_{n+1}} = \frac{1}{2+\frac{\frac{1}{Z} \log A_{n+1}}{-\log \mu(D_n)}}.
\end{eqnarray*}
Then by (\ref{mu-D_n}), we obtain
\begin{eqnarray*}
\liminf_{n\to\infty}\frac{-\log \mu(D_n)}{-\log |D_n|} \geq \frac{1}{2+(B+\epsilon -1)}=\frac{1}{B+1+\epsilon}.
\end{eqnarray*}
Therefore the lower bound follows from the Frostman Lemma (see \cite[Principle 4.2]{Fa1}).
 %\end{proof}

\bigskip
\noindent{\bf Acknowledgements.}
The authors thank Bao-Wei Wang for the fruitful discussions.
L. Liao was partially supported by the ANR, grant 12R03191A -MUTADIS (France).
M.Rams was partially supported by the MNiSW grant N201 607640 (Poland).

 {\small

\end{document}